\newcommand{\R}{\ensuremath{\mathbb{R}}}
\newcommand{\C}{\ensuremath{\mathbb{C}}}
\newcommand{\CC}{\mathcal{C}}
\newcommand{\CR}{\ensuremath{\mathcal{R}}}
\newcommand{\CO}{\ensuremath{\mathcal{O}}}
\newcommand{\ov}{\overline}
\newcommand{\la}{\lambda}
\newcommand{\g}{\gamma}
\newcommand{\G}{\Gamma}
\newcommand{\f}{\varphi}
\newcommand{\al}{\alpha}
\newcommand{\be}{\beta}
\newcommand{\sgn}{\mathrm{sign}}
\newcommand{\de}{\delta}
\newcommand{\re}{\mathrm{Re}}
\def\p{\partial}
\def\e{\varepsilon}
\newtheorem {theorem} {Theorem}
\newtheorem {definition} {Definition}
\newtheorem {remark} [theorem]{Remark}
\newtheorem {mtheorem} {Theorem}
\begin{document}
\renewcommand{\arraystretch}{1.5}

\title[Shilnikov problem in Filippov dynamical systems]
{Shilnikov problem in Filippov\\ dynamical systems}

\author[D.D. Novaes and M.A. Teixeira]
{Douglas D. Novaes and Marco A. Teixeira}

\address{Departamento de Matem\'{a}tica, Universidade
Estadual de Campinas,\\ Rua S\'{e}rgio Buarque de Holanda, 651, Cidade Universit\'{a}ria Zeferino Vaz, 13083-859, Campinas, SP,
Brazil} \email{ddnovaes@ime.unicamp.br} \email{teixeira@ime.unicamp.br}

\subjclass[2010]{34A36,37C29,37H20}

\keywords{Filippov systems, sliding dynamics, sliding homoclinic orbits, Shilnikov homoclinic orbits, sliding Shilnikov orbits, piecewise linear differential systems}

\maketitle

\begin{abstract}
In this paper we introduce the concept of sliding Shilnikov orbits for $3$D Filippov systems. In short, such an orbit is a piecewise smooth closed curve, composed by Filippov trajectories, which slides on the switching surface and connects a Filippov equilibrium to itself, namely a pseudo saddle-focus. A version of the Shilnikov's Theorem is provided for such systems. Particularly, we show that sliding Shilnikov orbits occur in generic one-parameter families of Filippov systems, and that arbitrarily close to a sliding Shilnikov orbit there exist countably infinitely many sliding periodic orbits. Here, no additional Shilnikov-like assumption is needed in order to get this last result. In addition, we show the existence of sliding Shilnikov orbits in discontinuous piecewise linear differential systems.  As far as we know, the examples of Fillippov systems provided in this paper are the first exhibiting such a sliding phenomenon.
\end{abstract}

\bigskip

\noindent{\bf
Piecewise smooth differential systems has become a frontier between mathematics and sciences in general. The study of such systems contributes to the understanding of a wide range of phenomena in several areas of physics, engineering, biology, economy, etc  \cite{BBCK,physDspecial}. The dynamics of piecewise smooth differential systems is ruled by the Filippov convention \cite{F}. In this case, they are called Filippov systems. In such a context, the trajectories are allowed to slide on the switching surface giving rise to ``sliding phenomena'', which are inherent of Filippov systems. In this paper, a study is carried out on a nonlinear sliding phenomenon that we call a ``Shilnikov sliding orbit''. This phenomenon bears a striking resemblance to ``Shilnikov homoclinic orbits'' for smooth differential systems \cite{S1,S2,S3}.  Our main result states a version of Shilnikov's Theorem  for such orbits. More specifically, we show that arbitrarily close to a sliding Shilnikov orbit there exist countably infinitely many sliding periodic orbits. In the smooth context, this result is true under a certain assumption (Shilnikov condition). Here, no additional Shilnikov-like assumption is needed. Finally, we analyze a family of piecewise linear differential systems and we analytically show that a Shilnikov sliding orbit exists for such a family.}

\section{Introduction}

The study of piecewise smooth dynamical systems produces interesting mathematical challenges (see, for instance, \cite{T,NTZ}). These systems are widely used in several branches of science (see, for instance, \cite{BBCK,physDspecial} and the references therein). The present work focuses on the analysis of a nonlinear phenomenon that occurs in such systems, which bears a striking resemblance to {\it Shilnikov homoclinic orbit} in smooth differential systems.

We start by defining the concept of {\it Shilnikov homoclinic orbit} for smooth vector fields. Consider a smooth three dimensional vector field $X$ for which $p\in\R^3$ is a hyperbolic saddle-focus equilibrium. The hyperbolic saddle-focus has a two-dimensional invariant manifold $W^2,$ associated with the complex conjugate eigenvalues, $\la_{1,2}\in\C,$ and a one-dimensional invariant manifold $W^1,$ associated with the  real eigenvalue $\la_0\in\R.$ These two invariant manifolds have opposite stability.  
A {\it Shilnikov homoclinic orbit} $\Gamma$ is a trajectory of $X$ connecting $p$ to itself, bi-asymptotically. Thus, $\Gamma\subset W^1\cap W^2.$ Under suitable genericity conditions, this orbit is a codimension one scenario and its unfolding depends on the {\it saddle quantity} $\sigma=\la_0+\re(\la_{1,2}).$ We say that the {\it Shilnikov condition} {\bf (S)} holds when, among the eigenvalues, the pair of complex conjugate $\la_{1,2},$ is the nearest to the imaginary axis in the complex plane. Notice that, assuming $\la_0>0$ (resp. $\la_0<0$), the Shilnikov condition holds if, and only if, $\sigma>0$ (resp. $\sigma<0$).
In \cite{S2}, Shilnikov showed that  $\Gamma$ is isolated from periodic orbits provided that {\bf (S)} does not hold. In this case, at most one limit cycle bifurcates from $\Gamma$ when it is unfolded. Conversely in \cite{S1}, assuming condition {\bf (S)},  Shilnikov proved the existence of countably infinitely many saddle periodic orbits in a neighborhood of $\Gamma$ and, in \cite{S3}, Shilnikov found a chaotic dynamics near $\Gamma$ (see also \cite{T1,T2}). The interested reader is referred to \cite{survey}, where it can be found a very nice review of Shilnikov's contributions.
  
In the theory of nonsmooth dynamical systems, the notion of solutions of a piecewise smooth differential equation expressed as
 \begin{equation}\label{eq1intro}
x'=Z(x)=F_0(x)+\sgn(h_1(x))F_1(x)+\cdots+\sgn(h_k(x))F_k(x)
\end{equation}
 is stated by the Filippov's convention (see \cite{F}). In the above differential equation, $F_i,$ $i=0,1,\ldots,k,$ are smooth vector fields defined on an open subset $D\subset \R^3,$ and $h_i:D\rightarrow\R,$ $i=1,2,\ldots,k,$ are smooth real functions having $0$ as a regular value and satisfying $h_i^{-1}(0)\cap h_j^{-1}(0)=\emptyset,$ for $i\neq j.$  As usual, $\Sigma=\cup_{i=1}^k h_i^{-1}(0)$ denotes the {\it switching surface}.
 
It is worthwhile to mention that  Shilnikov homoclinic orbits have already been considered in the nonsmooth context. Indeed, in the earlier work of Tresser \cite{T2}, it is mentioned how to extend the Shilnikov's Theorems for Lipschitz continuous piecewise smooth differential systems. In \cite{Med}, the Shilnikov homoclinic  bifurcation was analytically studied in Chua's circuit model, which is a continuous piecewise linear differential system with three pieces.  For this last system, the Shilnikov homoclinic connection and the associated strange attractors had already been numerically detected in \cite{gribov} and \cite{chua1,chua3,chua2}, respectively.   Numerical arguments were also used in \cite{arneodo} to show the existence of Shilnikov homoclinic orbits in continuous piecewise linear differential systems with two pieces. Finally, in \cite{Carmona,LPT}, it was analytically shown the existence of Shilnikov homoclinic orbits for continuous piecewise linear differential systems with two pieces.  We emphasize that all above cited works deal with continuous piecewise smooth differential systems that admit a Shilnikov homoclinic orbit $\Gamma$ satisfying two main properties: {\bf (a)} $\Gamma$ is transversal to $\Sigma$ and {\bf (b)} the hyperbolic saddle-focus equilibrium is not contained in $\Sigma,$ that is, the vector field is smooth in a neighborhood of the equilibrium. 

Under assumptions {\bf (a)} and {\bf (b)}, Shilnikov homoclinic orbits can also be considered for discontinuous piecewise smooth differential systems, which will be referred as  {\it crossing Shilnikov orbit}. In this case, the transversality between $\Gamma$ and $\Sigma$ implies that the dynamics in a neighborhood of $\Gamma,$ concerning the transition of the trajectories of \eqref{eq1intro} through the switching surface $\Sigma,$ is of crossing type. This means that the local trajectories of \eqref{eq1intro}, for points in $\Sigma,$ are trivially given by the concatenation of the trajectories defined in both sides of $\Sigma.$ So, in this neighborhood, the trajectories of \eqref{eq1intro} define a continuous flow. Moreover, since the vector field is smooth in a neighborhood of the equilibrium, it is expected to get similar results to those for continuous vector fields, where the unfolding of $\Gamma$ depends on the Shilnikov condition {\bf (S)}.

In the Filippov context, special attention must be paid to some minimal sets contained in the switching surface $\Sigma,$ for which one cannot find their analogous in the smooth theory, the so called {\it pseudo equilibrium}. A pseudo equilibrium is a proper equilibrium of the well known {\it Filippov sliding dynamics} defined on the switching surface (see Section \ref{sp} for a formal definition of the {\it sliding vector field} and {\it pseudo equilibrium}). The sliding dynamics gives rise to the definition of  {\it sliding homoclinic orbit}, which is a trajectory, in the Filippov sense, sliding through the switching surface and connecting an equilibrium or a pseudo-equilibrium to itself.  In \cite{Glendinning18}, Glendinning studied Shilnikov chaos emerging from sliding homoclinc orbits connecting a hyperbolic saddle-focus to itself. It was also shown that this kind of orbit bifurcates from some boundary equilibrium (see also \cite{Simpson18}). Here, we focus on the study of sliding homoclinic orbits connecting a hyperbolic {\it pseudo saddle-focus} to itself, which we call a {\it sliding Shilnikov orbit} (see Definition \ref{defshil}). The hyperbolic pseudo saddle-focus also has a two-dimensional invariant manifold $W^2 \subset\Sigma^s,$ associated with complex conjugate eigenvalues, and a one-dimensional stable invariant manifold $W^1.$ However, we shall see that the trajectories on $W^1$ reach the pseudo equilibrium in finite time (see Figure \ref{slidingshil}).

\begin{figure}[h]
\begin{center}
\begin{overpic}[width=10cm]{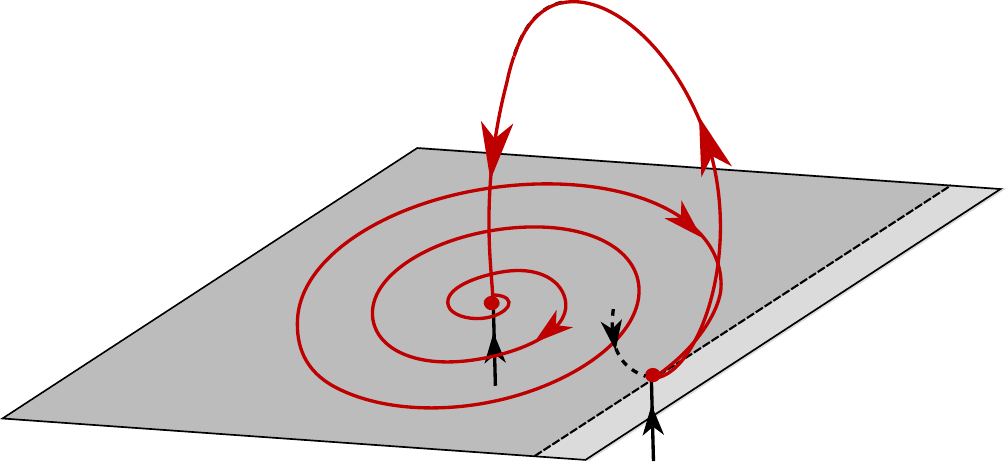}
\put(66,42){$\G$}
\put(51,-3){$\ell$}
\put(50.5,15){$p$}
\put(62.5,5){$q$}
\put(10,6){$W^2$}
\put(51,35){$W^1$}
\put(35,30){$\Sigma$}
\end{overpic}
\end{center}

\caption{Sliding Shilnikov homoclinic orbit $\Gamma$  connecting a hyperbolic pseudo saddle-focus $p$ to itself. Notice that $\Gamma$ has an entire segment of orbit contained in the switching surface $\Sigma$ and leaves it  through the fold point $q.$ Generically, the existence of a fold point $q$ implies the existence of a whole a curve $\ell\subset\Sigma$ of fold points.}\label{slidingshil}
\end{figure}

We emphasize that sliding Shilnikov orbits differ from Shilnikov homoclinic orbits, previously studied for nonsmooth differential systems, mainly in two aspects: first, for sliding Shilnikov orbits the pseudo equilibrium is contained in $\Sigma,$ whereas in all the previous cases the equilibrium is not contained in $\Sigma$; and second, analogous to the case addressed in \cite{Glendinning18}, sliding Shilnikov orbits have an entire segment of orbit contained in the switching surface $\Sigma$ and leaves it  through a quadratic contact point (fold point) between the vector field and $\Sigma,$ whereas the crossing Shilnikov orbits are transversal to $\Sigma.$ 

This paper has two main goals. The first consists in providing a version of the Shilnikov's Theorem, regarding the existence of countably infinitely many periodic solutions, for Filippov systems admitting a sliding Shilnikov orbit $\Gamma.$  Second, we prove that sliding Shilnikov orbits exist in discontinuous piecewise linear differential systems by providing explicit examples. 

The existence of countably infinitely many periodic solutions for Filippov systems admitting a sliding Shilnikov orbit will be obtained via topological mechanisms. More specifically, we shall apply the {\it Brouwer Fixed Point Theorem} for the first return map of $Z$ associated with $\G,$ which is a one-dimensional  map defined in a curve of fold points on the switching surface $\Sigma.$  We shall see that, in this case, no Shilnikov-like assumption is needed and that the hyperbolicity assumption on the pseudo-equilibrium can also be avoided (see Remark \ref{shilcond}). In addition, as far as we know, the examples of discontinuous piecewise linear differential systems provided in this paper are the first exhibiting such a sliding phenomenon.

This paper is organized as follows. Section \ref{sp} contains some basic notions and definitions on Filippov systems as well as the formal definition of a sliding Shilnikov orbit (see Definition \ref{defshil}). In Section \ref{mr}, we state our main results, Theorems \ref{t1} and \ref{t2}.  In short,   Theorem \ref{t1} claims that sliding Shilnikov orbits occur in generic one-parameter families of vector fields in  $\Omega^r.$ In addition, any neighborhood of a Filippov system admitting a sliding Shilnikov orbit contains infinitely many topological equivalence class of vector fields.  Theorem \ref{t2} provides the existence of countably infinitely many sliding periodic orbits  near a sliding Shilnikov orbit. Section \ref{proof} is devoted to the proofs of Theorems \ref{t1} and \ref{t2}. In Section \ref{lm}, we analyze some explicit examples of one-parameter families of piecewise linear differential systems, $Z_{\mu}\in\Omega^r.$ Theorem \ref{t3} shows that, for the critical value of the parameter $\mu=0,$ $Z_0$ exhibits a sliding Shilnikov orbit. Finally, Section \ref{cfd} contains some closing remarks.

\section{Filippov systems and sliding Shilnikov orbit}\label{sp}
 
In this section the basic notions of Filippov systems and the definition of sliding Shilnikov orbit are given. Let $U$ be an open bounded subset of $\R^3.$ We denote by $\CC^r(K,\R^3),$ $K=\ov U,$ the set of all $\CC^r$ vector fields $X:K\rightarrow \R^3$ endowed with the topology induced by the norm $||X||_r=\sup\{||D^i X(\xi)||:\,\xi\in K,\,i\in\{0,1,\ldots,r\}\}.$   Given $h:K\rightarrow\R$ a differentiable function having $0$ as a regular value we denote by $\Omega_h^r(K,\R^3)$ the space of piecewise vector fields
\begin{equation}\label{omega}
Z(\xi)=\left\{\begin{array}{l}
X(\xi),\quad\textrm{if}\quad h(\xi)>0,\\
Y(\xi),\quad\textrm{if}\quad h(\xi)<0,
\end{array}\right.
\end{equation}
with $X,Y\in \CC^r(K,\R^3).$ As usual, system \eqref{omega} is denoted by $Z=(X,Y)$ and the switching surface $h^{-1}(0)$ by $\Sigma.$ So, we are taking $\Omega_h^r(K,\R^3)=\CC^r(K,\R^3)\times \CC^r(K,\R^3)$ endowed with the product topology. When the context is clear we shall refer the sets $\Omega_h^r(K,\R^3)$ and $\CC^r(K,\R^3)$ only by $\Omega^r$ and  $\CC^r,$ respectively. We emphasize that \eqref{omega} is a local description of \eqref{eq1intro}.

Some regions on $\Sigma$ must be distinguished. The points on $\Sigma$ where both vectors fields $X$ and $Y$ simultaneously point outward or inward from $\Sigma$ define, respectively, the {\it escaping} $\Sigma^e$ or {\it sliding} $\Sigma^s$ regions,  and the interior of its complement in $\Sigma$ defines the {\it crossing region} $\Sigma^c.$ The complementary of the union of those regions  constitutes the {\it tangency} points between $X$ or $Y$ with $\Sigma.$ The points in $\Sigma^c$ satisfy $Xh(\xi)\cdot Yh(\xi) > 0,$ where $Xh$ denotes the derivative of the function  $h$ in the direction of the vector $X,$ that is, $Xh(\xi)=\langle \nabla h(\xi), X(\xi)\rangle.$ The points in $\Sigma^s$ (resp. $\Sigma^e$) satisfy $Xh(\xi)<0$ and $Yh(\xi) > 0$ (resp. $Xh(\xi)>0$ and $Yh(\xi) < 0$). Finally, the tangency points of $X$ (resp. $Y$) satisfy $Xh(\xi)=0$ (resp. $Yh(\xi)=0$). 

\begin{definition}\label{fold}
A tangency point $\xi\in\Sigma$ is called {\bf visible fold} of 
$X$ $($resp. $Y)$ if $(X)^2h(\xi)>0$ $($resp. $(Y)^2h(\xi)<0).$ Reversing the inequalities, the tangency point is called {\bf invisible fold}. A visible/invisible fold point $\xi\in\Sigma$ of $X$ (resp. $Y$) is called {\bf visible/invisible fold-regular point} if $Yh(\xi)>0$ (resp. $Xh(\xi)<0$). 
\end{definition}

On the region $\Sigma^s\cup\Sigma^e,$ we define the {\it sliding vector field}
\begin{equation}\label{slisys}
\widetilde Z(\xi)=\dfrac{Y h(\xi) X(\xi)-X h(\xi) Y(\xi)}{Y h(\xi)- Xh(\xi)}.
\end{equation}

 The local trajectory $\f_Z(t,\xi)$ of the discontinuous piecewise differential system $\dot \xi= Z(\xi)$ passing through a point $\xi\in\R^3$ is given by Filippov's convention (see \cite{F,GST}). Let $I\subset\R$ be a sufficiently small neighborhood of the origin. Denote by  $\f_W(t,\xi)$ the trajectory of a vector field $W$ satisfying  $\f_W(0,\xi)=\xi.$ Then, the Filippov convention is summarized as following:

\begin{itemize}
\item[$(i)$] for $\xi\in U$ such that $h(\xi)>0$ (resp. $h(\xi)<0$), the local trajectory of $Z$ is defined as $\f_Z(t,\xi)=\f_X(t,\xi)$ (resp. $\f_Z(t,\xi)=\f_Y(t,\xi)$) for $t\in I.$

\item[$(ii)$] for $\xi\in\Sigma^c$ such that $(Xh)(\xi),(Yh)(\xi)>0$ the local trajectory of $Z$ is defined as $\f_Z(t,\xi)=\f_Y(t,\xi)$ for $t\in I\cap\{t<0\}$ and $\f_Z(t,\xi)=\f_X(t,\xi)$ for $t\in I\cap\{t>0\}.$ For the case $(Xh)(\xi),(Yh)(\xi)<0,$ the definition is the same reversing time;
 
\item[$(iii)$] for $\xi\in\Sigma^s$ the local trajectory is defined as $\f_Z(t,\xi)=\f_{\widetilde{Z}}(t,\xi)$ for $t\in I\cap\{t\geq 0\}$  and $\f_Z(t,\xi)$ is either $\f_X(t,\xi)$ or $\f_Y(t,\xi)$ or $\f_{\widetilde{Z}}(t,\xi)$ for $t\in I\cap\{t\leq 0\}.$ For the case $\xi\in\Sigma^e$ the definition is the same reversing time.
\end{itemize}

For tangency points  $\p\Sigma^c\cup\p\Sigma^s\cup\p\Sigma^e$ the definition of local trajectory is more delicate. Here, we provide the definition for visible fold-regular points (see Definition \ref{fold}). Let $\xi\in\Sigma$ be a visible fold-regular point of $Z.$ Without loss of generality, assume that $\xi$ is a visible fold point of $X.$  Following item $(iv)$ above, the local trajectory of $Z$ passing through $\xi$ is defined as $\f_Z(t,\xi)=\f_1(t,\xi)$ for $t\in I\cap\{t\leq 0\}$ and $\f_Z(t,\xi)=\f_2(t,\xi)$ for $t\in I\cap\{t\geq 0\},$ where $\f_1$ is either $\f_X$ or $\f_Y$ or $\f_{\widetilde{Z}}$ and $\f_1$ is $\f_X.$ 

An equilibrium $\xi^*\in\Sigma^{s,e}$ of the sliding vector field (that is, $\widetilde Z(\xi^*)=0$) is called a {\it pseudo equilibrium} of $Z.$ We say that $\xi^*$ is {\it hyperbolic pseudo equilibrium} of $Z$ when it is a hyperbolic equilibrium of $\widetilde Z.$ Particularly, if $\xi^*\in\Sigma^s$ (resp. $\xi^*\in\Sigma^e$) is an unstable (resp. stable) hyperbolic focus of $\widetilde Z,$
then we call $\xi^*$ a {\it hyperbolic saddle-focus pseudo equilibrium} or just {\it hyperbolic pseudo saddle-focus}.

In order to study the orbits of the sliding vector field it is convenient to define the normalized sliding vector field
\begin{equation}\label{norslisys}
\widehat Z(\xi)=(Y h(\xi)- Xh(\xi))\widetilde Z(\xi)=Y h(\xi) X(\xi)-X h(\xi) Y(\xi),
\end{equation}
which has the same phase portrait of $\widetilde Z$ reversing the direction of the flow in the escaping region. Indeed, system \eqref{norslisys} is obtained by multiplying the sliding vector field \eqref{slisys} (time rescaling) by the function $Y h(\xi)- Xh(\xi),$ which is positive (resp. negative) for $\xi\in\Sigma^s$ (resp. $\xi\in\Sigma^e$). 

The next definition introduces the concept of sliding Shilnikov orbit (see \cite{NovaesThesis15}).

\begin{definition}\label{defshil}
Let $Z=(X,Y)$ be a piecewise continuous vector field having a hyperbolic pseudo saddle-focus $p\in \Sigma^{s}$ $($resp. $p\in \Sigma^{e}),$ and let  $q\in\p\Sigma^s$ $($resp. $q\in\p\Sigma^e)$ be a visible fold-regular point of $Z$ such that:
\begin{itemize}
\item[$(j)$] the backward (resp. forward) trajectory of $Z$ starting at $q$ follows the sliding vector field $\widetilde Z$ and converges to $p$ backward in time $($resp. forward in time$)$;

\item[$(jj)$] the forward (resp. backward) trajectory of $Z$ starting at $q$ intersects the switching surface only at crossing points and reaches $p$ in finite time $t_0>0$ $($resp. $t_0<0).$
\end{itemize}
So, through $p$ and $q,$ a sliding loop $\G$ is easily characterized. We call $\G$ a {\it sliding Shilnikov orbit} $($see Figures \ref{slidingshil}$).$
\end{definition}

The next definition introduces the concept of $\Sigma$-equivalence of Filippov vector fields (see, for instance, \cite{T}).  Of course, the notion of  $\Sigma$-structural stability in $\Omega^r$ is naturally obtained.

\begin{definition}
Let $Z_1,Z_2\in\Omega^r.$ We say that $Z_1$ and $Z_2$ are $\Sigma$-equivalent if there exists a homeomorphism $h:U\rightarrow U$ satisfying $h(\Sigma)=\Sigma$ and sending orbits of $Z_1$ to orbits of $Z_2.$  
\end{definition}

\begin{remark}\label{transv} Assume that $q\in\p\Sigma^{e,s}$ is a visible fold-regular point of $Z.$ Then, the following properties hold (for more details, see \cite{T}): 
\begin{itemize}
\item[$(i)$] there exists a neighborhood $U$ of $q$ such that $\ell=U\cap\p\Sigma^{e,s}$ is constituted by visible fold-regular points; 
\item[$(ii)$] the sliding vector field $\widetilde Z$ \eqref{slisys} is transverse to $\ell;$ 
\item[$(iii)$] and there exists a neighborhood $V$ of $\ell$ such that $Z\big|_V$ is structurally stable.  
\end{itemize}
\end{remark}

\section{Main results}\label{mr}

Our first main result shows that sliding Shilnikov orbits occur in generic one-parameter families of vector fields in  $\Omega^r$ (see \cite{soto}). Furthermore, if $Z_0$ admits a sliding Shilnikov orbit, then any neighborhood $W\subset\Omega^r$ of $Z_0$ contains infinitely many topological equivalence classes of vector fields.

\begin{mtheorem}\label{t1}
Assume that $Z_0=(X_0,Y_0)\in\Omega^r$ $($with $r\geq 1)$ has a sliding Shilnikov orbit $\G_0$ and let $W\subset \Omega^r$ be a small neighborhood of $Z_0.$ Then, there exists a $\CC^1$ function $g:W\rightarrow\R,$ having $0$ as a regular value, such that $Z\in W$ has a sliding Shilnikov orbit $\G$ if, and only if, $g(Z)=0.$ Furthermore, any neighborhood $W\subset\Omega^r$ of $Z_0$ contains infinitely many $\Sigma$-equivalence classes of Filippov vector fields.
\end{mtheorem}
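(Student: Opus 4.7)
The plan is to reduce the sliding Shilnikov condition for $Z$ near $Z_0$ to a single transverse scalar equation $g(Z)=0$. As $Z$ varies I would track three ingredients: the pseudo saddle--focus $p(Z)$, the visible fold arc $\CL(Z)=\{Xh=0\}$ of $X$ through $q_0$, and the image $C(Z):=\pi_Z(\CL(Z))\subset\Sigma$ of this arc under the first return of the $X$--flow to $\Sigma$. The sliding connection amounts to the incidence $p(Z)\in C(Z)$ of a point and a curve inside the two--dimensional surface $\Sigma$, which is one scalar condition.

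First I would set up these parameterizations via the implicit function theorem. Note that $\widehat Z$ is automatically tangent to $\Sigma$ since $\langle\widehat Z,\nabla h\rangle = Yh\cdot Xh - Xh\cdot Yh\equiv 0$, so hyperbolicity of $p_0$ as a zero of $\widehat Z_0|_\Sigma$ produces a $\CC^1$ family $Z\mapsto p(Z)\in\Sigma^s$. The visible fold condition $X_0 h(q_0)=0$, $X_0^2 h(q_0)>0$ is open, so $\CL(Z)$ is a $\CC^1$ family of smooth arcs that I parameterize as $s\mapsto q_Z(s)$ with $q_{Z_0}(0)=q_0$. Since $X_0 h(p_0)<0$, the $X_0$--orbit from $q_0$ meets $\Sigma$ transversally at $p_0$ in time $t_0>0$, so smooth dependence of flows on the vector field and initial datum gives a $\CC^1$ first--return time $\tau(Z,s)$ and a $\CC^1$ return point $\pi(Z,s):=\f_X(\tau(Z,s),q_Z(s))\in\Sigma$, defined near $(Z_0,0)$.

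To define $g$, I assume momentarily that $\p_s\pi(Z_0,0)\neq 0$ (discussed below), so $C(Z_0)$ is a regular $\CC^1$ arc through $p_0$. Choose coordinates on $\Sigma$ centred at $p_0$ with this tangent direction as the $x_1$--axis; then for $Z$ close to $Z_0$ the curve $C(Z)$ is a graph $x_2=\psi(x_1,Z)$ with $\psi(\,\cdot\,,Z_0)\equiv 0$, and writing $p(Z)=(p_1(Z),p_2(Z))$ I set
\[
g(Z) \;:=\; p_2(Z) - \psi(p_1(Z),Z).
\]
This $g$ is $\CC^1$, $g(Z_0)=0$, and $g(Z)=0$ iff some $q\in\CL(Z)$ near $q_0$ satisfies $\pi_Z(q)=p(Z)$. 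To see that $0$ is a regular value, I exploit the asymmetry that $C(Z)$ depends only on $X$ (and the fixed $h$) while $p(Z)$ depends on both components: a perturbation $\dot Z=(0,V)$ leaves $C(Z)$ unchanged to first order but, via linearization of $\widehat Z(p;Z)=0$, shifts $p(Z)$ by a tangent vector which can be prescribed freely in $T_{p_0}\Sigma$ (because $\widehat Z$ is linear in $Y$ and $D_\xi\widehat Z_0|_{T_{p_0}\Sigma}$ is invertible by hyperbolicity of $p_0$). Choosing $V$ so that the resulting first--order displacement of $p(Z)$ is in the $x_2$--direction gives $dg_{Z_0}(\dot Z)\neq 0$.

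Finally I would establish the equivalence $g(Z)=0\iff Z$ has a sliding Shilnikov orbit near $\G_0$. If $g(Z)=0$ the unique $q\in\CL(Z)$ supplied by the graph picture fulfills (jj) of Definition \ref{defshil} with time $\tau(Z,s(Z))$; condition (j), backward $\widetilde Z$--convergence to $p(Z)$, is an open condition near $Z_0$ because the backward basin of the unstable focus $p(Z)$ in $\Sigma^s$ contains a neighbourhood of $q_0$ at $Z_0$ and varies lower semicontinuously in $Z$ (Remark \ref{transv} ensures that $\widetilde Z$ extends transversally across $q$ to continue the orbit into $\Sigma^s$). Conversely any sliding Shilnikov orbit of $Z$ near $\G_0$ yields $q\in\CL(Z)$ with $\pi_Z(q)=p(Z)$, hence $g(Z)=0$. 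The main technical point I foresee is verifying the nondegeneracy $\p_s\pi(Z_0,0)\neq 0$, i.e. that $C(Z_0)$ is a regular (not cuspidal) arc at $p_0$; I would extract this from the visible--fold normal form of $X_0$ at $q_0$ combined with the transverse arrival of the $X_0$--orbit at $p_0$, using that $D\f_{X_0}^{t_0}(q_0)$ is an isomorphism and sends the tangent to $\CL_0$ at $q_0$ to a direction not parallel to $X_0(p_0)$, so that its projection to $T_{p_0}\Sigma$ is nonzero.
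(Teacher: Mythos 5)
Your proposal follows essentially the same route as the paper's proof: both reduce the existence of a sliding Shilnikov orbit to the incidence $p(Z)\in C(Z)$ of the pseudo--equilibrium with the image of the fold arc under the $X$--flow, write that image curve locally as a graph over coordinates on $\Sigma$ centred at $p_0$, and take $g$ to be the resulting vertical difference. If anything, your verification that $0$ is a regular value --- perturbing $Y$ alone, which leaves $C(Z)$ fixed while moving $p(Z)$ freely through the invertibility of $D\widehat Z_0$ at $p_0$ --- is more concrete than the paper's, which simply asserts the existence of a curve $Z_v$ along which $p_{Z_v}$ stays put and $k_{Z_v}$ translates; likewise your explicit check that $\mu_0$ is a regular arc at $p_0$ fills in a step the paper leaves implicit.
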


Theorem \ref{t1} is proved in Section 4.1.

\begin{remark}\label{rem}
As a consequence of Theorem \ref{t1}, $g^{-1}(0)$ is a codimension-1 submanifold of $W.$ Hence, for each $Z^*\in g^{-1}(0),$ there exists a curve $Z^*_{\mu}\subset W,$ with $\mu\in\R$ taken in a small neighborhood of $0,$ which intersects $g^{-1}(0)$ transversally at $Z^*_{0}=Z^*.$ Particularly, for $Z_0,$ we say that $Z_{\mu}$ is a splitting of the sliding Shilnikov orbit $\G_0$ (see Figure \ref{unfold}). 
\end{remark}

\begin{figure}[H]
\begin{center}
\begin{overpic}[width=14cm]{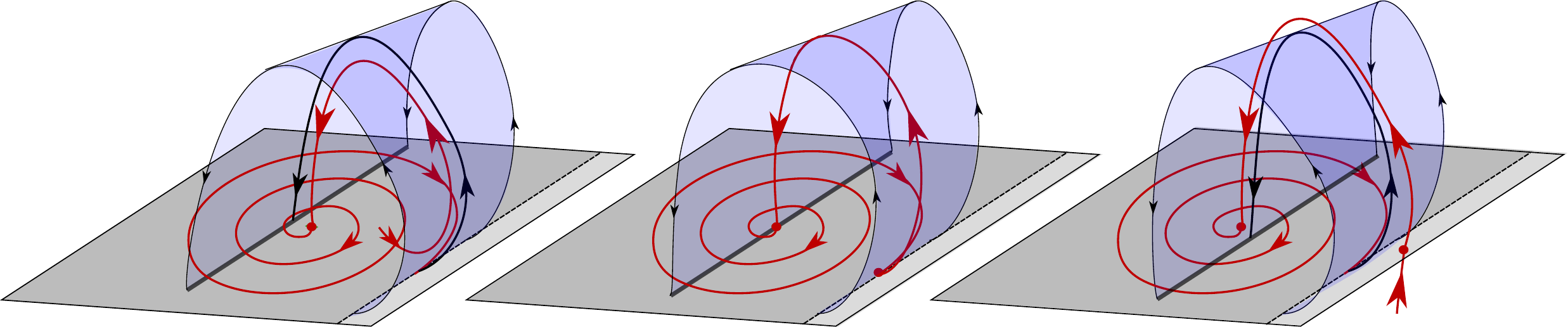}
\put(58,15){$\G$}
\put(10,-3.5){$\mu<0$}
\put(40,-3.5){$\mu=0$}
\put(70,-3.5){$\mu>0$}
\end{overpic}
\end{center}

\vspace{0.8cm}

\caption{Generic one-parameter family $Z_{\mu}=(X_{\mu},Y_{\mu})\in\Omega^r$ for which $Z_0$ has a sliding Shilnikov orbit $\G.$}\label{unfold}
\end{figure}

Our second main result is a version of Shilnikov's theorem regarding the existence of sliding periodic orbits of $Z_{\mu},$ which intersect the fold-regular curve just at one point ($1$-periodic) in a neighborhood of a sliding Shilnikov orbit.

\begin{mtheorem}\label{t2}
Assume that $Z_0=(X_0,Y_0)\in\Omega^r$ $($with $r\geq0)$ has a sliding Shilnikov orbit $\G_0$ and let $Z_{\mu}=(X_{\mu},Y_{\mu})\in\Omega^r$ be a splitting of $\Gamma_0.$ Then, the following statements hold:
\begin{itemize}
\item[$(a)$] for $\mu=0$ every neighborhood $G\subset\R^3$ of $\G_0$ contains countably infinitely many sliding $1$-periodic orbits of $Z_0$;

\item[$(b)$] Let $G\subset\R^3$ be a sufficiently small neighborhood of $\G_0.$ Then, for each  $|\mu|\neq0$ sufficiently small, $G$ contains at least a finite number $N(\mu)>0$ of sliding $1$-periodic orbits of $Z_{\mu}.$ Moreover, $N(\mu)\to\infty$ when $\mu\to 0.$
\end{itemize}
\end{mtheorem}

Theorem \ref{t2} is proved in Section 4.2.

\begin{remark}\label{shilcond}
In the smooth context the hyperbolic saddle-focus has a two-dimensional invariant manifold $W^2,$ associated with the complex conjugate eigenvalues, and a one-dimensional invariant manifold $W^1,$ associated with the real eigenvalue. These two invariant manifolds have opposite stability. As said before, the Shilnikov condition imposes that, among the eigenvalues, the pair of complex conjugate is the nearest to the imaginary axis in the complex plane. Roughly speaking, this means that the linear effect of $W^1$ is stronger than the spiral effect of $W^2$  on the solutions in a neighborhood of the equilibrium. For instance, if $W^1$ is a stable manifold, then the linear attraction to the equilibrium is stronger than the spiral repulsion from the equilibrium.

In Theorem \ref{t2}, it is worthwhile to notice that no Shilnikov-like condition is needed in order to guarantee the existence of countably infinitely many sliding periodic orbits. In our nonsmooth setting, the pseudo saddle-focus also has a two-dimensional unstable invariant manifold $W^2 \subset\Sigma^s,$ associated with the complex conjugate eigenvalues, and a one-dimensional stable invariant manifold $W^1$ (see Figure \ref{slidingshil}). However, the trajectories on $W^1$ reach the pseudo equilibrium in finite time, which implies that the attraction to the pseudo equilibrium in the $W^1$-direction is, in some sense, infinitely stronger than the spiral repulsion from the pseudo equilibrium. Hence, the balance between the attraction and repulsion effects of the invariant manifolds, required by the Shilnikov condition in the smooth context, is automatically satisfied  for sliding Shilnikov orbits with no further assumptions.

Moreover, it will be clear in the proof of Theorem \ref{t2} that the hyperbolicity assumption on the pseudo saddle-focus is not necessary to get statement $(a)$, which still holds when $p_0$ is an unstable weak focus of the sliding vector field.
\end{remark}

\section{Proofs of the main results}\label{proof}

This section is completely devoted to the proofs of Theorem \ref{t1} (Section 4.1) and Theorem \ref{t2} (Section 4.2).  We start providing some common notions and notations  for both proofs.

First, for the sake of simplicity, we take $h(x,y,z)=z,$ that is, $\Sigma=\{z=0\}.$ Suppose that $Z_0=(X_0,Y_0) \in \Omega^r$ admits a sliding Silnikov orbit $\G_0,$ which connects the hyperbolic pseudo focus-saddle $p_0=(0,0,0)\in\Sigma^s$ to itself and contains the fold-regular point $q_0.$ The case when $p_0\in\Sigma^e$ would follow similarly. Without loss of generality, we assume that $q_0$ is a visible fold point for  $X_0$ and that the arc-orbit of $Z$ connecting $q_0$ to $p_0,$ in this direction, intersects the switching surface $\Sigma$ only at $p_0$ and $q_0.$ 

Now, denote $\gamma_{\e }=\ov{B_{\e }(q_0)\cap\p\Sigma^s},$ where $B_{\e }(q_0)\subset \Sigma$ is the planar ball with center at $q_0$ and radius $\e.$ Notice that, for $\e >0$ sufficiently small, $\gamma_{\e }$ is a curve of fold points and  the sliding vector field $\widetilde Z_0$ (see \eqref{slisys}), defined on $\Sigma^s,$ is transversal to $\gamma_{\e }$ (see Remark \ref{transv}). From Definition \ref{defshil}, $p_0$ is a hyperbolic focus of the sliding vector field $\widetilde Z_0,$ the backward trajectory of $\widetilde Z_0,$ starting at $q_0,$ converges to $p_0,$ and the forward trajectory of $X_0,$ starting at $q_0,$ reaches transversally the switching surface $\Sigma$ at $p_0.$ Hence, the implicit function theorem can be used to show that, for $\e >0$ sufficiently small, the backward trajectories of $\widetilde Z_0,$ starting at points of $\gamma_{\e },$  converge to $p_0,$ and the forward trajectories of $X_0,$ starting at points of $\gamma_{\e },$ reach transversally the switching surface $\Sigma$ in a curve $\nu_{\e }.$ Notice that $p_0\in\nu_{\e }.$

Finally, let $W\subset  \Omega^r$ be a small neighborhood of $Z_0.$  From the structural stability property of fold-regular points (as discussed in Remark \ref{transv}), each $Z=(X,Y) \in W$ admits a fold-regular point $p_Z$ contained in a curve of fold-regular points $\gamma_{\e }^Z$ satisfying $p_Z\to p_0$ and $\gamma_{\e}^Z\to\gamma_{\e}$ as $Z\to Z_0.$ In addition, from differentiable dependence results (see, for instance, \cite[Chapter 6]{LS}) of the trajectories of $X$ and $\widetilde Z$ on the initial conditions and parameters ($Z$ can be seen as a parameter on a Banach space), we conclude the following: the backward trajectories of $\widetilde Z,$ starting at points of $\gamma_{\e }^Z,$ converge to $p_Z;$  the  forward trajectories of $X,$ starting at points of $\gamma_{\e }^Z,$ reach transversally the switching surface $\Sigma$ in a curve $\nu_{\e }^Z;$ and $\nu_{\e}^Z\to \nu_{\e}$ as $Z\to Z_0.$ Notice that, in this case, $Z$ has a sliding Shilnikov orbit if, and only if, $p_Z\in\nu_{\e }^Z.$ In the case that $Z_{\mu}$ is a splitting of the sliding Shilnikov connection $\Gamma_0$ (see Remark \ref{rem}) we shall denote $\g_{\e }^{\mu}=\g_{\e }^{Z_\mu},$ $S_{\e }^{\mu}=S_{\e }^{Z_\mu},$ $\nu_{\e }^{\mu}=\nu_{\e }^{Z_\mu},$ and $p_{\mu}=p_{Z_{\mu}}.$

Now we are ready to prove Theorems \ref{t1} and \ref{t2}.

\subsection{Proof of Theorem \ref{t1}}
We may assume that, in a suitable local coordinate system $(x,y)$ around $p_0 \in \Sigma^s,$ $\nu_{\e }$ is given by $y=0,$ that is, $\nu_{\e }=\{(x,0,0):\,-\e\leq x\leq \e\}.$ So,  for $Z \in W,$ $\nu_{\e }^Z$ is also given as a graph $y=k_{Z}(x)= a_0^Z+a_1^Z x + \CO_2(x),$ with $a_0^Z,\,a_1^Z$ small parameters satisfying $a_1^{Z_0}=a_2^{Z_0}=0.$ 

Denote $p_Z=(x_Z,y_Z,0)$ and define $g: W\rightarrow\R$ by $g(Z)=k_{Z}(x_Z) -y_Z.$ From the initial comments, $g$ is a $\CC^1$ function and $g(Z_0)=0.$ We prove now that $0$ is a regular value of $g,$ that is, the linear map $g'(Z^*):\Omega^r\rightarrow\R$ is surjective for every $Z^*\in g^{-1}(0).$ Let $Z^*\in W$ satisfying $g(Z^*)=0$ and take $V\in\Omega^r.$ The derivative of $g$ at $Z^*$ in the direction $V,$ $g'(Z^*)\cdot V,$ can be computed as

\[
g'(Z^*)\cdot V=\dfrac{d}{d v}g(Z(v))\Big|_{v=0}= \lim_{v\to 0}\dfrac{g(Z(v))-g(Z^*)}{v},
\]
where $Z(v)$ is any smooth curve in $\Omega^r$ such that $Z(0)=Z^*$ and $Z'(0)=V\in\Omega^r.$ So, taking $Z(v)$ in such a way that $p_{Z(v)}=(0,0)$ and $k_{Z(v)}(x)=v,$ we get that $g(Z(v))=v$ and, therefore, $g'(Z^*)\cdot V=1.$ This implies that $g'(Z^*)$ is surjective for every $Z^*\in g^{-1}(0).$

Finally, let $Z_{\mu}$ be a splitting of the sliding Shilnikov connection $\Gamma_0$ (see Remark \ref{rem}). Since the pseudo equilibrium $p_{\mu}$ of $Z_{\mu}$ is not contained in $\nu_{\e }^{\mu},$ for $\mu\neq0,$ the saturation of $\gamma_{\e}^{\mu}$ through the backward flow of $\widetilde Z_0$ intersects $\nu_{\e }^{\mu}$ in a finite number $N(\mu)$ of disjoint sets. Thus, one can find trajectories of $\widetilde Z_0,$ starting at $\gamma_{\e}^{\mu},$ which intersect $\nu_{\e }^{\mu}$ in $N(\mu)$ points. In addition, the intersection between $\nu_{\e }^{\mu}$  and any trajectory of $\widetilde Z_0,$ starting at $\gamma_{\e}^{\mu},$ has no more than $N(\mu)$ points.  Now, if  $Z_1,Z_2\in W$ are topologically equivalent, then $\gamma_{\e}^{Z_1}$ and $\nu_{\e}^{Z_1}$ are sent to $\gamma_{\e}^{Z_2}$  and $\nu_{\e}^{Z_2},$ respectively. Hence, $N(\mu_1)\neq N(\mu_2)$ implies that $Z_{\mu_1}$ and $Z_{\mu_2}$ are not $\Sigma$-equivalent. Since $N(\mu)\to\infty$ as $\mu\to 0,$ we  get the existence of infinitely many $\Sigma$-equivalence classes of Filippov vector fields in any neighborhood $W\subset\Omega^r$ of $Z_0.$ This concludes the proof.

\subsection{Proof of Theorem \ref{t2}}

To prove statement $(a),$  let $S_{\e }$ be the backward saturation of $\g_{\e }$ through the flow of the sliding vector field $\widetilde Z.$  Since $p_0$ is a focus of the sliding vector field, we get
\[
S_{\e }\cap\nu_{\e }=\bigcup_{i=1}^{\infty} I_i,
\]
where the sequence of compact sets $(I_i)_{i=1}^{\infty}$ satisfies: $I_i\cap I_j=\emptyset$ for $i\neq j$ and $I_i\to \{p_0\}$ (see Figure \ref{POShilnikov}).

\begin{figure}[H]
\begin{center}
\begin{overpic}[width=9.5cm]{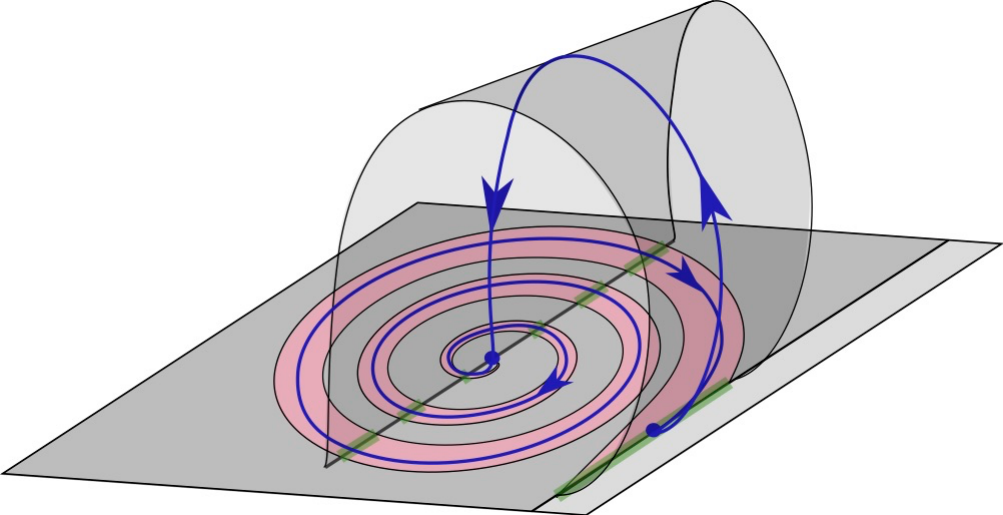}
\put(70,35){$\G$}
\put(83,24){$\Sigma^{s}$}
\put(51,15){$p_0$}
\put(66,6){$q_0$}
\put(70.5,10){$\g_{\e }$}
\put(24,10){$S_{\e }$}
\put(29,4){$\nu_{\e }$}
\put(67,26){$I_1$}
\put(38,6){$I_2$}
\put(57,23.5){$I_3$}
\put(39,12){$I_4 \cdots$}
\end{overpic}
\end{center}

\medskip

\caption{Representation of the sequence of compact sets $(I_i)_{i=1}^{\infty},$ satisfying $I_i\to \{p_0\},$ which are characterized by the intersection between $\nu_{\e }$ and $S_{\e }.$
}\label{POShilnikov}
\end{figure}

Now, for each $i=1,2,\ldots,$ we define $J_i$ as the intersection between the backward saturation of $I_i$ through the flow of $X$ with the curve $\g_{\e }.$ Clearly, $J_i\cap J_j=\emptyset$ for $i\neq j$ and $J_i\to\{q_0\}.$ Moreover, a first return map $\pi$ is well defined on $\cup_{i=1}^{\infty} J_i.$ It is easy to see that $\pi$ is not injective. In what follows, we shall construct a sequence of applications  $(\psi_i)_{i=1}^{\infty},$ $\psi_i:J_i\rightarrow J_i,$ satisfying the following property: 
\begin{itemize}
\item[{\bf (P)}] {\it for each $i\in\mathbb{N},$ if $y \in J_i$ and $x=\psi_i(y),$  then $\pi(x)=y.$}
\end{itemize}
This property implies that a fixed point of $\psi_i$ is also a fixed point of $\pi.$ So, for $\xi\in\Sigma^s$ and $z\in\R^3$ let $\f^s(t,\xi)$ and $\f^X(t,z)$ be the flows of $\widetilde Z$ and $X,$ respectively.
For $\xi\in J_i$ there exists $t^s_i(\xi)<0$ and  $t^X_i(\xi)<0$ such that $\xi_i(\xi)=\f^s(t^s_i(\xi),\xi)\in I_i$ and  $\f^X(t^X_i(\xi),\xi_i(\xi))\in J_i,$ respectively. So, define $\psi_i(\xi)=\f^X(t^X_i(\xi),\xi_i(\xi)).$ Notice that $\psi_i$ is a $\CC^r$ function. From the above construction, the property {\bf (P)} is satisfied for the sequence of functions $(\psi_i)_{i=1}^{\infty}$ and, consequently, fixed points of $\psi_i$ correspond to sliding periodics orbit of $Z$ (see Figure \ref{PPOShilnikov}). 

\begin{figure}[H]
\begin{center}
\begin{overpic}[width=9.5cm]{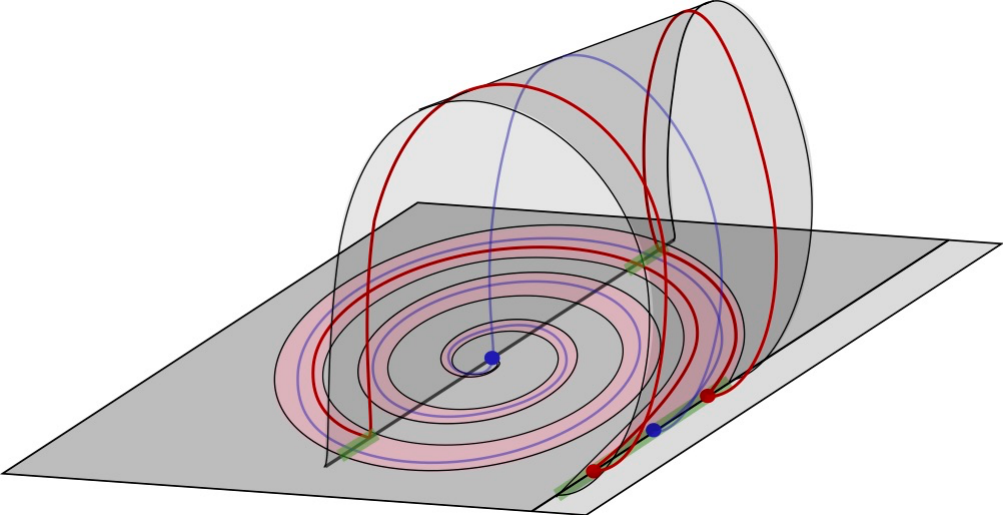}
\put(70,35){$\G$}
\put(83,24){$\Sigma^{s}$}
\put(51,15){$p_0$}
\put(66,6){$q_0$}
\put(70.5,9.5){$q_1$}
\put(59,2){$q_2$}
\put(67,26){$I_1$}
\put(38,6){$I_2$}
\end{overpic}
\end{center}

\medskip

\caption{Representation of sliding periodic orbits of $Z$ corresponding to fixed points of $\psi_i.$}\label{PPOShilnikov}
\end{figure}

Now, for each $i\in\mathbb{N},$ $\psi_i$ is a continuous function from a compact interval $J_i$ to itself. So,  applying the {\it Brouwer fixed-point Theorem} we obtain a sequence $(q_i)_{i=1}^{\infty}$ such that $q_i\in J_i$ and $\psi_i(q_i)=q_i.$ Hence, we conclude that there exists a sequence of sliding periodic orbits of $Z$ passing through $q_i.$ The proof of statement $(a)$ follows just by observing that $q_i\to q_0.$ 

In what follows we prove statement $(b).$ Let $Z_{\mu}$ be a splitting of the sliding Shilnikov connection $\Gamma_0$ (see Remark \ref{rem}). Since the pseudo equilibrium $p_{\mu}$ of $Z_{\mu}$ is not contained in $\nu_{\e }^{\mu},$ the intersection $S_{\e }^{\mu}\cap\nu_{\e }^{\mu}\cap G$ has only a finite number $N(\mu)$ of disjoint sets $I_i.$ Furthermore, the number of disjoint sets $N(\mu)$ in this intersection tends to infinity when $\mu$ goes to $0.$ From here, the proof of statement $(b)$ follows analogously to the proof of statement $(a).$

\section{A piecewise linear example}\label{lm}
In this section we present one-parameter families of discontinuous piecewise linear vector fields $Z_{\mu}\in\Omega^r$ for which $Z_0$ admits a sliding Shilnikov orbit. 

For $\al>0,$ $\be>0,$ and $\mu\in\R,$ consider the following discontinuous piecewise linear vector field.
\begin{equation}\label{s1}
Z_{\mu}(x,y,z)=\left\{\begin{array}{ll}
X(x,y,z)=\left(
\begin{array}{c}
-\al\\
x-\beta\\
y-\dfrac{3\beta^2}{8\al}
\end{array}
\right)&\textrm{if}\quad z>0,\vspace{0.2cm}\\

Y_{\mu}(x,y,z)=\left(
\begin{array}{c}
\al\\
\dfrac{3\al}{\beta}y+\beta\\
\mu+\dfrac{3\beta^2}{8\al}
\end{array}
\right)&\textrm{if}\quad z<0.
\end{array}\right.
\end{equation}
Notice that $\Sigma=\{z=0\}$ is the switching surface of system \eqref{s1}, which can be decomposed as $\Sigma=\ov{\Sigma^c}\cup\ov{\Sigma^s}\cup\ov{\Sigma^e}$ where
\[
\begin{array}{llll}
\Sigma^c=\left\{(x,y,0):\,y>\dfrac{3\be^2}{8\al}\right\}, &\Sigma^s=\left\{(x,y,0):\,y<\dfrac{3\be^2}{8\al}\right\}&\textrm{and}&\Sigma^e=\emptyset.
\end{array}
\]
Moreover, $c=\left(\beta,3\beta^2/(8\al),0\right)\in\p\Sigma^s$ is a cusp-regular point, $\{\left(x,3\beta^2/(8\al),0\right):\, x<\beta\}\subset\p\Sigma^s$ is a curve of invisible fold-regular points, and $\{\left(x,3\beta^2/(8\al),0\right):\, x>\beta\}\subset\p\Sigma^s$ is a curve of visible fold-regular points.

We shall see that, in the above families of Filippov systems, $\mu$ is a bifurcation parameter, for which a sliding Shilnikov orbit exists only for the critical value $\mu=0.$   Parameters $\al$ and $\beta$ do not play any role, and just make the example more general. Finally, we point out that the presence of the fractions $3\beta^2/(8\al)$ and $3\al/\beta$ will greatly simplify the expression of the normalized sliding vector field after a variable rescaling.

 Our main result on the above family is the following.

\begin{mtheorem}\label{t3} 
For each positive real numbers $\al$ and $\beta$ the following statements hold:
\begin{itemize}
\item[$(a)$] For $\mu=0,$  the origin $p_0=(0,0,0)$ is a hyperbolic pseudo saddle-focus of $Z_0,$ which is an unstable hyperbolic focus of the sliding vector field $\widetilde Z_0.$ Moreover, $Z_0$ admits a sliding Shilnikov orbit, connecting $p_0$ to itself, passing through the visible fold-regular point $q_0=\big(3\be/2,3\be^2/(8\al),0\big)$ (see Figure \ref{simula}).
\item[$(b)$] For $\mu\neq0,$ $Z_0$ does not admit a sliding Shilnikov orbit.
\end{itemize}
\end{mtheorem}

\begin{proof}
We compute the sliding vector field \eqref{slisys} as
\[
\widetilde Z_{\mu}(x,y,0)=\left(\dfrac{4\al^2 (y+\mu)}{4\al (y-\mu)-3\beta^2}\,,\,\dfrac{3\be^3x+\al\beta^2y-24\al^2y^2+8\al\be\mu(x-\be)}{6\be^3-8\al\beta (y-\mu)}\,,\,0\right).
\]
Since the sliding vector field $\widetilde Z_{\mu}$ is defined only on the planar region $\Sigma^s\cup\Sigma^e\subset\Sigma,$ it can be identified with the following planar vector field
\begin{equation}\label{slid}
\widetilde Z_{\mu}(x,y)=\left(\dfrac{4\al^2 (y+\mu)}{4\al (y-\mu)-3\beta^2}\,,\,\dfrac{3\be^3x+\al\beta^2y-24\al^2y^2+8\al\be\mu(x-\be)}{6\be^3-8\al\beta (y-\mu)}\right).
\end{equation}

Notice that $p_{\mu}=\left(\dfrac{3\al}{\be}\mu,-\mu \right)$ is a singularity of $\widetilde Z_{\mu}$ for every $\mu\in\R.$ 

For $\mu=0,$ the normalized sliding vector field of \eqref{s1} writes
\begin{equation}\label{nslid}
\widehat Z_{0}(x,y)=\left(-\al y\,,\,\dfrac{3\be^2}{8\al}x+\dfrac{\be}{8}y-\dfrac{3\al}{\be}y^2\right).
\end{equation}
Notice that the origin is a hyperbolic focus for $\widehat Z_0,$ and also for  $\widetilde Z_0.$ Indeed, their eigenvalues are given by
\begin{equation*}\label{eigenvalues}
\la^{\pm}=\dfrac{\al}{12\be}\pm i\dfrac{\sqrt{95}\al}{12\beta}.
\end{equation*}
It implies that $p_0=(0,0,0)$ is a hyperbolic pseudo saddle-focus of $Z_0.$ Moreover, since $\re(\la^{\pm})>0,$ then $(0,0)$ is an unstable hyperbolic focus of the (normalized) sliding vector field \eqref{nslid}.  After a change of variables and a time rescaling, expressed by
\begin{equation}\label{change}
(x,y)=\left(\dfrac{3\be}{2}u,\dfrac{3\be^2}{8\al}v\right)\quad \text{and} \quad t=-\dfrac{4}{\beta}\tau,
\end{equation}
the normalized sliding vector field $\widehat Z_0$ becomes
\begin{equation}\label{trans}
\ov{Z}=\left(v,-6u-\dfrac{1}{2}v+\dfrac{9}{2}v^2\right).
\end{equation}
Notice that the time rescaling \eqref{change} reverses the direction of the flow of $\eqref{slid}.$ The tangency line $\p\Sigma^s$ is given now, in $(u,v)$ coordinates, by $\ell=\{(u,1):u\in\R\}.$

We claim that the orbit of system \eqref{trans} starting at the point $(1,1)\in\ell$ converges to the focus equilibrium $(0,0)$ without touching the line $\ell.$ Clearly, going back through the transformation \eqref{change}, this implies that the orbit of system \eqref{slid} starting at the visible fold-regular point $q_0=\left(3\be/2,3\be^2/(8\al),0\right)\in\p\Sigma^s$ is attracted, now backward in time, to the focus $(0,0,0)$ without touching the tangency line $\p\Sigma^s.$ To prove this claim we shall construct a compact region $\CR$ in the $u,v$-plane, which is positively invariant through the flow of the vector field \eqref{trans}. Accordingly, let $m(v)=-13/108+9v^2/13+54v^3/169,$ and define the curves
\[
\begin{array}{l}
\CC_1=\{(u,1):\,m(1)\leq u\leq 1\},\\
\CC_2=\{(u,-2u+3):\,1\leq u\leq3/2\},\\
\CC_3=\{(3/2,v):\,-91/72<v<0\},\\
\CC_4=\{(u,-91/72):\,m(-91/72)\leq u\leq 3/2\},\\
\CC_5=\{(m(v),v):\,-91/71\leq v\leq 1\}.
\end{array}
\]
We define $\CR$ as being the compact region delimited by the curves $\CC_i$ for $i=1,2,\ldots,5$ (see Figure \ref{sliding}). After some standard computations we conclude that $\CR$ is positively invariant through the flow of \eqref{trans}. Furthermore, the vector field  \eqref{trans} has at most one limit cycle (see Theorem A of \cite{CGL}), which is hyperbolic. So,  from the positive invariance of $\CR,$ from the stability of the equilibrium $(0,0),$ and from the uniqueness of a possible limit cycle we conclude that,  if this limit cycle exists, then it cannot be inside $\CR.$ Applying the Poincar\'{e}-Bendixson theorem we conclude that the stable focus of \eqref{trans} attracts the orbits, forward in time, of all points in $\CR$ without touching the line $\ell.$ The claim follows by noting that $(1,1)\in\CR$ (see Figure \ref{sliding}).
\begin{figure}[H]
\begin{center}
\begin{overpic}[width=7cm]{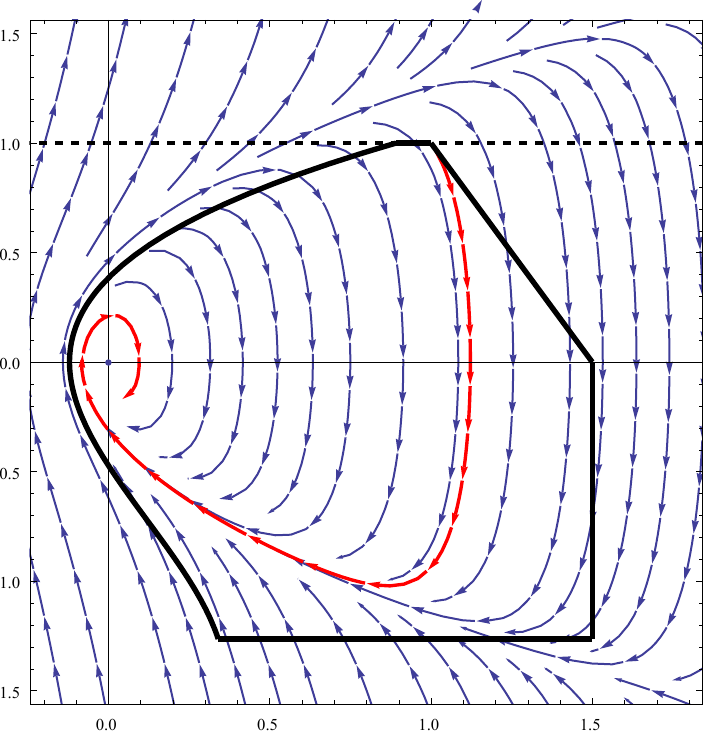}
\put(98,80){$\ell$}
\put(55,82){$\CC_1$}
\put(71,65){$\CC_2$}
\put(82,30){$\CC_3$}
\put(51,7){$\CC_4$}
\put(15,68){$\CC_5$}
\end{overpic}
\end{center}
\caption{The dashed bold line represents the tangency line $\ell.$ The continuous bold lines represents the curves $\CC_i,$ $i=1,2,\ldots,5,$ and delimit the compact region $\CR$ which is positively invariant through the flow of \eqref{trans}. The red curve is the trajectory of \eqref{trans} starting at $(1,1)$ being attracted to the focus $(0,0).$}\label{sliding}
\end{figure}

We remark that the function $m(v),$ defined above, was obtained as an approximation of  an invariant curve of  \eqref{trans} expressed as $u=\ov{m}(v).$ Indeed, taking $\ov {m}(v)=p_3(v)+\CO(v^4),$ with $p_2(v)=k_0+k_1 v+k_2 v^2+k_3 v^3,$ and imposing that 
\[
\langle\nabla(u-\ov{m}(v)), \ov{Z}(u,v)\rangle\big|_{u=\ov{m}(v)}=0,
\]
 we conclude that $p_3(v)=m(v).$ Furthermore, considering the curve $u=m(v)$ we see that
\[
\langle\nabla(u-m(v)), \ov{Z}(u,v)\rangle\big|_{u=m(v)}=\dfrac{729 v^4(91+72 v)}{28561},
\]
which does not change its sign for $-91/71\leq v\leq 1.$  

Finally, the vector field $X$ is linear. Thus, its trajectory starting at the point $q=(x_0,3\be^2/(8\al),0)\in\p\Sigma^s$ is easily computed as
\[
 \f^+(t,q)=\left(x_0-\al\, t\,,\,\dfrac{3\be^2+8\al(x_0-\be)t-4\al^2 t^2}{8\al}\,,\,\dfrac{3(x_0-\be)t^2-\al t^3}{6}\right).
\]
Notice that, for $q\in\p\Sigma^s$ and $t^+(q)=3(x_0-\beta)/\al$>0,  $\f^+(t^+(q),q)\in\Sigma^s.$ Moreover, $\f^+(t^+(q_0),q_0)=p_0.$ It implies that there exists a sliding Shilnikov orbit of $Z_0$ connecting $p_0$ to itself passing through $q_0$ (see Figure \ref{simula}). This concludes the proof of statement $(a).$

To get statement $(b),$ we notice that there is no solution to the equation $\f^+(t^+(q),q)=p_{\la},$ for $\la\neq0.$
\end{proof}

\begin{figure}[H]
\begin{center}
\begin{overpic}[width=13cm]{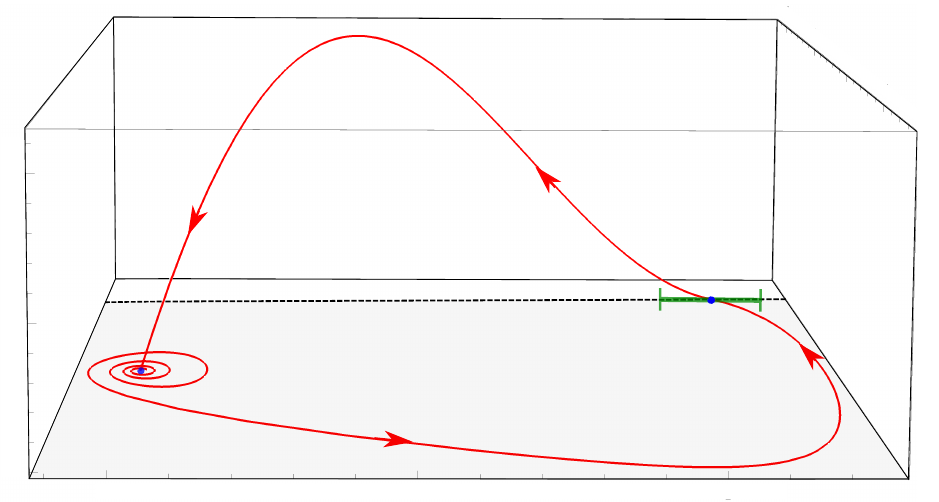}
\put(75,23.5){$q_0$}
\put(12,17){$p_0$}
\put(90,4.5){$\Sigma^s$}
\put(55,40){$\Gamma_0$}
\put(5,21){$\p\Sigma^s$}
\end{overpic}
\end{center}
\caption{Sliding Shilnikov orbit $\Gamma_0$ of the piecewise linear differential system \eqref{s1} (for $\al=1/2,$ $\beta=3/2,$ and $\mu=0$) connecting the hyperbolic pseudo saddle-focus $p_0=(0,0,0)$ to itself, passing through the  visible fold-regular point $q_0=\big(3\be/2,3\be^2/(8\al),0\big).$}\label{simula}
\end{figure}

\section{Conclusions and further directions}\label{cfd}
In this paper we study a sliding homoclinic orbit to a pseudo saddle-focus of Filippov systems. Following the nomenclature of  smooth differential systems, this sliding homoclinic orbit is called sliding Shilnikov orbit. A version of the Shilnikov's Theorem was given in this context. More specifically, Theorem \ref{t1} showed that sliding Shilnikov orbits occur in generic one-parameter families of vector fields in  $\Omega^r$ (see \cite{soto}). In addition, if $Z_0$ admits a sliding Shilnikov orbit, then any neighborhood $W\subset\Omega^r$ of $Z_0$ contains infinitely many topological equivalence classes of vector fields. Theorem \ref{t2} provides the existence of countably infinitely many sliding periodic orbits  near a sliding Shilnikov orbit. In Theorem \ref{t2}, it is worthwhile to mention that no Shilnikov-like condition is needed in order to guarantee the existence of countably infinitely many sliding periodic orbits (see Remark \ref{shilcond}). Finally, Theorem \ref{t3} provided explicit 1-parameter families, $Z_{\mu}\in\Omega^r,$ of piecewise linear vector fields, for which $Z_0$ admits a sliding Shilnikov orbit.

Understanding how a sliding Shilnikov orbit behaves under smoothing process (see \cite{ST}) is a major problem in this context. If $Z\in\Omega^r$ admits a sliding Shilnikov orbit, it seems possible to show the existence of 1-parameter families, $Z^{\de},$ of smooth differential systems approaching continuously to $Z$ such that, for each $\de>0$ small enough, $Z^{\de}$ admits a Shilnikov connection. 

Also, higher dimensional vector fields allow the existence of many other kinds of sliding homoclinic connections. So,  the study of typical sliding homoclinic connection in higher dimensions seems to be a very fertile theme of research.  

Another possible direction for further investigation is to apply the techniques from ergodic theory to provide deeper results for these kind of connections. For instance, results on the existence of symbolic extensions, conjugation with Bernoulli shifts, and existence of Smale horseshoes would be very welcome.

Finally, preliminary studies indicate that a sliding Shilnikov orbit may exist in piecewise smooth biological models, namely prey switching models (see \cite{Piltz}). Since the existence of Shilnikov homoclinic orbits is a usual route to chaos, it seems interesting to investigate the existence of sliding Shilnikov orbits in piecewise smooth models of real phenomena.

\section*{Acknowledgements}

We thank the referees for their comments and suggestions that helped us to greatly improve the presentation of this paper.

DDN is partially supported by FAPESP grant 2018/16430-8 and by CNPq grants 306649/2018-7 and 438975/2018-9. MAT is partially supported by a CNPq grant 301275/2017-3.

\bibliographystyle{abbrv}
\bibliography{references.bib}

\end{document}